\newtheorem{thm}{Theorem}
\newtheorem{cor}{Corollary}
\newtheorem{lem}{Lemma}
\renewcommand{\geq}{\geqslant}
\author{Dmitry Gayfulin}
\address{\parbox{\linewidth} {National Research University Higher School of Economics,\\ 11 Pokrovsky Bulvar, Moscow, 109028, Russia.}}
\email{gamak.57.msk@gmail.com (corresponding author)}
\author{Sergei Pitcyn}
\address{\parbox{\linewidth} {Moscow Center of Fundamental and Applied Mathematics}}
\email{picyn98@mail.ru}
\title{A note on exact approximations}
\begin{document}

\maketitle
\begin{abstract}
Based on M. Hall's theorem, we prove a simple result about real numbers that admit exact approximations by rationals.
\vskip+0.3cm
{\bf MSC}: 11J04
\end{abstract}

\vskip+1.5cm

\section{Exact approximations}

It is very well known due to Hurwitz's theorem that for every real irrational number $\alpha$ there exist
infinitely many rational numbers $\frac{p}{q}$ satisfying the inequality
\begin{equation}
\label{Hurwitz_ineq}
\left|\alpha - \frac{p}{q}\right| < \frac{1}{\sqrt{5}q^2}.
\end{equation}
Moreover, $\sqrt{5}$ in the denominator of \eqref{Hurwitz_ineq} cannot be replaced by any greater constant. If we put $\alpha=\frac{\sqrt{5}+1}{2}$, for any $\varepsilon>0$ the inequality
\begin{equation}
\label{Hurwitz_ineq_rev}
\left|\alpha - \frac{p}{q}\right|<\frac{1}{(\sqrt{5}+\varepsilon)q^2}
\end{equation}
has at most finitely many solutions in $\frac{p}{q}\in\mathbb{Q}$.

Properties \eqref{Hurwitz_ineq} and \eqref{Hurwitz_ineq_rev} can be generalized in the following way. Given $\alpha\in\mathbb{R}\setminus\mathbb{Q}$, suppose that there exists $\gamma$ such that the
inequality 
\begin{equation}
\label{1e}
\left|\alpha - \frac{p}{q}\right| < \frac{1}{\gamma q^2}
\end{equation}
has infinitely many solutions in rationals $\frac{p}{q}$, while a stronger inequality
\begin{equation}
\label{2e}
\left|\alpha - \frac{p}{q}\right| < \frac{1}{(\gamma+\varepsilon) q ^2}
\end{equation}
for any $\varepsilon>0$ has at most finitely many solutions.
In this situation, we will say that $\gamma$ satisfies the \textbf{exact} condition. One can easily see that the fact that $\gamma$ satisfies the exact condition for some irrational $\alpha$ implies that
\begin{equation}
\label{Lag_const}
\gamma=\left(\liminf_{p\in \mathbb{Z}, q \in \mathbb{Z}_+} q\left|q\alpha - p\right|\right)^{-1}.
\end{equation}

The right-hand side of \eqref{Lag_const} is usually called the Lagrange constant of $\alpha$. This quantity is denoted by $\lambda(\alpha)$. The set $\mathbb{L}$ of all possible values of $\lambda(\alpha)$ when $\alpha$ runs through the set of all irrational numbers is known as {\it the Lagrange spectrum}.  A famous theorem by Markoff (see 
%\cite{M} for the original proof and 
books \cites{Cas,CusFla} for detailed exposition and discussions) claims that the set ${\mathbb{L} \cap \left[0, 3\right)}$ is discrete and countable. It consists of increasing numbers 
$$
L_1   = {\sqrt{5}} <
L_2  = {\sqrt{8}} <
L_3  = \ {\sqrt{221}/5} <
\cdots <
L_j = L(m_j) =  {\sqrt{9- \frac{4}{m_j^2}}} <
\cdots,
$$
where ${m_j,\,j=1,2,3,\ldots}$ run over the set of so-called {\it Markoff numbers} i.e. the positive integers $m$ such that there exist $1\le m_1,m_2\le m$ satisfying the Markoff equation
\begin{equation*}
m^2+m_1^2+m_2^2=3mm_1m_2.    
\end{equation*}
We recall that $\mathbb{L}$ is a closed set and therefore its complement is a countable union of open intervals $(a, b)$ such that $(a,b) \cap \mathbb{L} = \emptyset$, but $a$ and $b$ both lie in the Lagrange spectrum. Such intervals are called the \textit{maximal gaps}.

It is known that all the values of $\gamma$ from the discrete part of the Lagrange spectrum satisfy the exact condition
(see  \cite{Cas},  Chapter II and \cite{CusFla}). 
Of course, if
$\gamma\not\in\mathbb{L}$, this number does not satisfy the exact condition.
However, some years ago, the first-named author \cites{G1,G2} 
pointed out a mistake in Malyshev's paper \cite{maly} and showed
that there exist numbers $\gamma\in \mathbb{L}$ which do not satisfy the exact  condition. The real numbers $\gamma$ satisfying the exact condition are called \textit{admissible}. The following theorem was established in \cite{G1}. 

\vskip+0.3cm

\begin{thm}
\label{thm_g1}
The number $\gamma \in \mathbb{L}$ is not admissible if and only if $\gamma$ is the left endpoint of a maximal gap in the Lagrange spectrum and there is no quadratic irrationality $\alpha$ such that $\lambda(\alpha) = \gamma$.
\end{thm}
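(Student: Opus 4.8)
The plan is to prove the equivalent statement obtained by negating both sides, namely that \emph{$\gamma\in\mathbb{L}$ is admissible if and only if $\gamma$ is not the left endpoint of a maximal gap, or $\gamma=\lambda(\alpha)$ for some quadratic irrationality $\alpha$}. The first step is a reformulation of admissibility. If $\gamma$ is admissible, witnessed by some $\alpha$, then $\gamma=\lambda(\alpha)$ by \eqref{Lag_const}; and then $(\gamma+\varepsilon)^{-1}$ is strictly smaller than the liminf in \eqref{Lag_const}, so \eqref{2e} automatically has only finitely many solutions. Hence $\gamma$ is admissible precisely when there exists $\alpha$ with $\lambda(\alpha)=\gamma$ for which \eqref{1e} still has infinitely many solutions. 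Writing $p_k/q_k$ for the convergents of $\alpha=[a_0;a_1,a_2,\dots]$ and setting $M_k=[a_{k+1};a_{k+2},\dots]+[0;a_k,a_{k-1},\dots,a_1]$, one has $q_k^2\,|\alpha-p_k/q_k|=M_k^{-1}$, and by Perron's formula $\lambda(\alpha)=\limsup_k M_k$. So the assertion to be proved is: there is an $\alpha$ with $\limsup_k M_k=\gamma$ and $M_k>\gamma$ for infinitely many $k$ if and only if $\gamma$ is not the left endpoint of a maximal gap or $\gamma$ is the Lagrange value of a quadratic irrational.

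\emph{Quadratic realisations are admissible.} Suppose $\gamma=\lambda(\alpha_0)$ with $\alpha_0$ quadratic. Since $\lambda$ depends only on the tail of the continued fraction, I may take $\alpha_0=[\overline{b_1,\dots,b_m}]$ purely periodic, and then $\gamma$ equals the largest of the $m$ two-sided seam values $[b_i;b_{i+1},\dots]+[0;b_{i-1},b_{i-2},\dots]$, the continued fractions running cyclically through the period. For the continued fraction that starts at a maximising index, the numbers $M_k$ along the corresponding residue class modulo $m$ tend to $\gamma$, and the sign of $M_k-\gamma$ flips with the parity of $k$, since the truncations of a continued fraction lie alternately above and below its value. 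Hence a suitable cyclic shift of the period, or the reversal $[\overline{b_m,\dots,b_1}]$ (which has the same Lagrange value), yields infinitely many $k$ with $M_k>\gamma$, so $\gamma$ is admissible. This is in essence the classical computation underlying the fact, quoted above, that the discrete part of the spectrum is admissible.

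\emph{If $\gamma$ is not the left endpoint of a maximal gap, it is admissible.} Choose $\gamma_n=\lambda(\beta_n)$ with $\gamma_n\downarrow\gamma$ and $\gamma_n>\gamma$, a position $k_n$ in the continued fraction of $\beta_n$ with $M^{(\beta_n)}_{k_n}>\gamma$, and a block $B_n$ of partial quotients of $\beta_n$ of large radius about $k_n$. Since the forward and backward tails of a continued fraction change its value by only $O(\rho^{\ell})$ for a fixed $\rho<1$, the quantity $M_{k_n}$ recomputed inside any word carrying $B_n$ at that spot is still $>\gamma$. Build $\alpha$ by concatenating $B_1,B_2,\dots$ separated by long buffer words; because $\gamma\ge\sqrt 5$ (equality being the quadratic case handled above), the buffers can be chosen so that every seam value at or near a junction remains $<\gamma$, while all seam values inside $B_n$ are $\le\gamma_n+o(1)$. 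Then $\limsup_k M_k=\gamma$, and the positions inherited from the $k_n$ give infinitely many $k$ with $M_k>\gamma$, so $\gamma$ is admissible. The delicate point is that for large $\gamma$ the partial quotients of the $\beta_n$ need not be bounded, which prevents taming the junctions with buffers of bounded height; this is precisely where I would use M.~Hall's theorem, to replace the $\beta_n$ by numbers all of whose partial quotients are at most $4$ and whose Lagrange values still decrease to $\gamma$ from above, after which the buffer construction is uniform.

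\emph{If $\gamma$ is the left endpoint of a maximal gap $(\gamma,b)$ and has no quadratic realisation, it is not admissible.} This is the core of the proof, and it relies on the structure theory of the spectrum near a maximal gap. There one has an essentially unique (up to shift and reversal) bi-infinite word $w^{\ast}$, with forward half $c=(c_1,c_2,\dots)$ and backward half $d=(d_1,d_2,\dots)$, whose central seam value equals $\gamma$ and is maximal among all its seam values, and which is \emph{strictly} extremal: any word of Lagrange value $\le\gamma$ agreeing with $w^{\ast}$ to some depth at a near-maximal seam has a strictly smaller seam value there unless it equals $w^{\ast}$. Moreover $\gamma$ admits a quadratic realisation exactly when $w^{\ast}$ is periodic, which by hypothesis it is not. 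Now suppose $\gamma$ admissible via $\alpha$, so that $M_{k_j}>\gamma$, $M_{k_j}\to\gamma$, $k_j\to\infty$. Since $\lambda(\alpha)=\gamma$, every window-limit of the continued fraction of $\alpha$ has Lagrange value $\le\gamma$, so for all large $j$ the continued fraction of $\alpha$ must agree with $w^{\ast}$ about $k_j$ to depths tending to infinity in both directions; as $w^{\ast}$ is not periodic this agreement must break at a finite depth, and at the first disagreement the continued fraction of $\alpha$ can only deviate towards a continuation less extremal than $c$ (respectively than $d$), because it is constrained to Lagrange value $\le\gamma$. By the monotonicity of continued fractions in their partial quotients, a deviation of this type forces $[a_{k_j+1};a_{k_j+2},\dots]<[c_1;c_2,\dots]$ and $[0;a_{k_j},a_{k_j-1},\dots,a_1]<[0;d_1,d_2,\dots]$, whence $M_{k_j}<[c_1;c_2,\dots]+[0;d_1,d_2,\dots]=\gamma$, contradicting $M_{k_j}>\gamma$. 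Hence no such $\alpha$ exists. I expect this last implication to be the main obstacle — specifically the quantitative, uniform form of strict extremality, and the handling of the degenerate case in which the continued fraction of $\alpha$ tracks $w^{\ast}$ to full depth and one is reduced to a parity analysis of truncations — and it is the point at which the combinatorial machinery behind Markoff's theorem, and Hall's theorem once more for gaps high in the spectrum, is used in full; the rest is elementary continued-fraction bookkeeping.
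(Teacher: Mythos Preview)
The paper does not prove this theorem. Theorem~\ref{thm_g1} is quoted verbatim as a result ``established in \cite{G1}'' (Gayfulin, \emph{Acta Arith.} 172:2 (2017), 185--199); the present note only uses it as background for its own Theorems~\ref{thm_oneside} and~\ref{thm_twoside}. There is therefore no proof in this paper to compare your proposal against.

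As for the proposal itself: your outline has the right three-way split, but the third implication is where the actual content lies, and you have not proved it. The claim that near a maximal gap there is an ``essentially unique (up to shift and reversal) bi-infinite word $w^{\ast}$'' which is \emph{strictly} extremal, and that $\gamma$ has a quadratic realisation iff $w^{\ast}$ is periodic, is precisely the substance of \cite{G1}; you invoke it as a known fact rather than establishing it. Likewise, in your second implication the assertion that buffer words can always be inserted so that every junction seam value stays below $\gamma$ is not justified (and is false without care when $\gamma$ is close to $\sqrt5$). Your parity argument for the quadratic case is also loose: along a residue class modulo the period the forward tail $[b_i;b_{i+1},\dots]$ is constant, and it is only the finite backward truncation $[0;b_{i-1},\dots,b_1]$ that alternates about its limit, so one must check that this alternation is not swamped by the other residue classes. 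In short, what you have written is a reasonable roadmap, but the hard steps are exactly those you defer to ``structure theory'' and ``combinatorial machinery,'' and those constitute the proof in \cite{G1}.
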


Particularly, it was shown in \cite{G1} that the number $[3;3,3, 2, 1, \overline{1,2}] + [0;2,1,\overline{1,2}]\in\mathbb{L}$ is not admissible.

\vskip+0.3cm

It follows from Hall's \cite{Hall} theorem that there exists a constant $\mu_0$ such that $[\mu_0,\infty)\in\mathbb{L}$. The half-line $[\mu_0,\infty)$ is usually called Hall's ray. The exact value of the origin of Hall's ray was found by Freiman, $\mu_0\approx 4.5278$. Taking into account Theorem \ref{thm_g1}, we obtain that every $\gamma\ge\mu_0$ is admissible (see also Theorem 2 from \cite{Bug2}).
The basic facts related to the results mentioned above and other properties of $\mathbb{L}$ can be found in \cite{CusFla}.

\section{Approximations with decreasing functions}

Instead of considering the inequalities of the form \eqref{1e} and \eqref{2e}, one can replace $\frac{1}{q^2}$ by a different function decaying to $0$. Particularly, consider two functions 
$ \Psi (q) >\Psi_1 (q) $ decreasing to zero as $ q\to \infty$. We are interested in existence of $\alpha\in \mathbb{R}\setminus \mathbb{Q}$ such that the inequality
\begin{equation}
\label{1ee}
\left|\alpha-\frac{p}{q}\right|<\Psi(q)
\end{equation}
has infinitely many solutions in rationals $\frac{p}{q}$, while a stronger inequality 
\begin{equation}
\label{2ee}
\left|\alpha-\frac{p}{q}\right|< \Psi_1(q)
\end{equation}
has at most finitely many solutions.

\vskip+0.3cm
In the last two decades, several papers devoted to different aspects of exact approximations have been published (see, for example  \cites{a,BW,Bug1,Bug2,BugMor,ggmm,mm,MoshPit}).
Particularly, in \cite{BW}, the following result was established. 

\begin{thm}
\label{thm_g2}
Suppose that the functions $\Psi(q)$ and $\omega(q)$ satisfy
\begin{equation}
\label{2eee}
\lim_{q\to\infty}\limits q^{2}\Psi(q)=\lim_{q\to\infty}\limits\omega(q)=0
\end{equation}
and $q\to q^{2}\Psi(q)$ is non-increasing. Furthermore, suppose that 
\begin{equation}
\label{2eeee}
\omega(q)\geq \frac{45}{\Psi(q)}\Psi\left(\frac{1}{2q\Psi(q)}\right)\,\,\,\,\,
\text{    for all}\,\, q\,\text{ sufficiently large and }\,
    \Psi_1(q)= \Psi(q)(1- \omega(q)).
\end{equation}
Then there exists an uncountable set $E$   such that for any $ \alpha \in E$ the inequality (\ref{1ee}) has infinitely many rational solutions, while the inequality (\ref{2ee})   has at most finitely many solutions.
\end{thm}
 
\vskip+0.3cm

Note that Theorem \ref{thm_g2} cannot be applied to the function $ \Psi(q) =\frac{1}{\gamma q^2}$ as the condition (\ref{2eee}) is not satisfied. Nevertheless, if we define the function 
$\omega(q)$ taking equality in \eqref{2eeee}, we get 
a constant function
 \begin{equation}
 \label{2y}
 \omega(q) =\frac{180}{\gamma^2}.
 \end{equation} 
 In the present paper, we consider just the case $ \Psi(q) =\frac{1}{\gamma q^2}$. In our results, the analogue of the function  $\omega(q)$ in \eqref{2eeee} and \eqref{2y} decays to zero faster than $\frac{1}{q^{2-\varepsilon}}$ for any $\varepsilon>0$, making the inequalities \eqref{1ee} and \eqref{2ee} in some sense best possible.
 Although our argument is very close to that of Theorem 2 from \cite{Bug2}, to the best
of the authors' knowledge, the following results have never been published before.
\vskip+0.3cm

\begin{thm}
\label{thm_oneside}
Given $\gamma>5$, let $\varpi: \mathbb N \longrightarrow \mathbb R_{+}$ be an arbitrary function increasing to infinity. Then there exists a number $\alpha$ such that the inequality
\begin{equation}
\label{8}
\left|\alpha - \frac{p}{q}\right| < \frac{1}{\gamma q^2}
\end{equation}
has infinitely many solutions in reduced fractions $\frac{p}{q}$, while the inequality
\begin{equation}
\label{8_conv}
\left|\alpha- \frac{p}{q}\right| < \frac{1}{\gamma q^2}\left(1 - \frac{\varpi(q)}{q^2}\right)
\end{equation}
has at most finitely many rational solutions.
\end{thm} 
\begin{thm}
\label{thm_twoside}
Let $\varpi: \mathbb N \longrightarrow \mathbb R_{+}$ be an arbitrary function increasing to infinity, and let $\gamma\ge4+[0;\overline{3,1}]+[0;2,\overline{1,3}]= 4.62202 \ldots$. Then there exists a number $\alpha$ such that the inequality
\begin{equation*}
\left|\alpha - \frac{p}{q}\right| < \frac{1}{\gamma q^2}\left(1 + \frac{\varpi(q)}{q^2}\right)
\end{equation*}
has infinitely many solutions in reduced fractions $\frac{p}{q}$, while the inequality
\begin{equation*}
\left|\alpha- \frac{p}{q}\right| < \frac{1}{\gamma q^2}\left(1 - \frac{\varpi(q)}{q^2}\right)
\end{equation*}
has at most finitely many rational solutions.
\end{thm}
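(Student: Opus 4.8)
The plan is to write $\alpha$ down explicitly as an infinite continued fraction built by a ``spike plus Hall block'' scheme, and to exploit the fact that, once $\varpi(q)\to\infty$, the two required inequalities only ask the quantity $\lambda_n:=\bigl(q_n^2\,|\alpha-\tfrac{p_n}{q_n}|\bigr)^{-1}$ to lie in a window of half–width $\asymp \gamma\,\varpi(q_n)/q_n^2$ around $\gamma$, which is enormous compared with the $O(q_n^{-2})$ error one unavoidably makes. First I would record two routine reductions. Both conclusions are monotone in $\varpi$: replacing $\varpi$ by a pointwise smaller function still tending to infinity only shrinks the solution set of the first inequality and enlarges that of the second, so we may assume $\varpi(q)\le\sqrt q$, hence $\varpi(q)/q^2\to0$ and $q^{-2}=o(\varpi(q)/q^2)$. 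For $\alpha=[a_0;a_1,a_2,\dots]$ with convergents $p_n/q_n$ one has $|\alpha-\tfrac{p_n}{q_n}|=(q_n^2\lambda_n)^{-1}$ with $\lambda_n=a_{n+1}+[0;a_n,a_{n-1},\dots,a_1]+[0;a_{n+2},a_{n+3},\dots]$, and any rational with $|\alpha-\tfrac pq|<\tfrac1{2q^2}$ — in particular any solution of the second inequality, since $\gamma>2$ — is a convergent. Thus both statements are really statements about the numbers $\lambda_n$.

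The input from Hall's theorem is where $\gamma_0=4+[0;\overline{3,1}]+[0;2,\overline{1,3}]$ enters. By M.~Hall's theorem and its refinements there is (for the given $\gamma$) an integer $A\ge 4$, and a Cantor set $\mathcal F\subset(0,1)$ of continued fractions $[0;c_1,c_2,\dots]$ with bounded partial quotients, arising from a mixing subshift, such that: (i) every sequence of the subshift has all of its Lagrange values $\lambda_m\le\gamma$; (ii) $\gamma=A+\beta+\delta$ for suitable $\beta=[0;b_1,b_2,\dots]$, $\delta=[0;d_1,d_2,\dots]$ in $\mathcal F$, and more is true — as $\gamma$ runs over $[\gamma_0,\infty)$ one can always solve this, since $\gamma_0-4$ is exactly the left endpoint of the maximal interval contained in $F(\{1,2,3\})+F(\{1,2,3\})$ and the admissible sum–set has length $\ge1$ once one is allowed to translate by integers; (iii) if one inserts the single partial quotient $A$ between the backward expansion of $\delta$ and the expansion of $\beta$, no index other than the insertion point acquires a Lagrange value exceeding $\gamma$ — a short computation with the first two or three partial quotients on each side of the spike, and it is precisely this computation (together with the need $\sqrt{21}=\lambda([0;\overline{3,1}])\le\gamma$ for the blocks) that pins the threshold down to $\gamma_0$.

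With this in hand I would build $\alpha$ in stages $k=1,2,\dots$, the $k$-th stage appending the block
\[
d_{\ell_k}\,d_{\ell_k-1}\,\cdots\,d_1\ \ A\ \ b_1\,b_2\,\cdots\,b_{r_k},
\]
with consecutive stages joined by bounded admissible ``connector'' words supplied by the mixing property. If $N_k$ is the position of the $k$-th spike $A$, then $\lambda_{N_k-1}=A+[0;a_{N_k-1},\dots,a_1]+[0;a_{N_k+1},a_{N_k+2},\dots]$, and each of these two tails differs from $\delta$, respectively $\beta$, only by the contribution of the partial quotients beyond the truncation, hence by at most $Q_{\ell_k}^{-2}$, resp.\ $(Q'_{r_k})^{-2}$, where $Q_{\ell_k}$, $Q'_{r_k}$ are the relevant continuant denominators. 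The crucial point is the order of choices: having fixed the first $k-1$ stages and the connector, the denominator $q^{\mathrm{start}}_k:=q_{N_k-\ell_k-1}$ before the $\delta$-run is a fixed number; since $q_{N_k-1}\asymp Q_{\ell_k}\,q^{\mathrm{start}}_k$ and $\varpi(q_{N_k-1})\to\infty$ as $\ell_k\to\infty$, we first pick $\ell_k$ so large that $Q_{\ell_k}^{-2}\le \tfrac14\gamma\,\varpi(q_{N_k-1})/q_{N_k-1}^2$; then, $q_{N_k-1}$ being fixed, we pick $r_k$ so large that $(Q'_{r_k})^{-2}\le\tfrac14\gamma\,\varpi(q_{N_k-1})/q_{N_k-1}^2$ as well. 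This gives $|\lambda_{N_k-1}-\gamma|\le\tfrac12\gamma\,\varpi(q_{N_k-1})/q_{N_k-1}^2$.

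Finally the verification. For the first inequality, by the last estimate the reduced fractions $p_{N_k-1}/q_{N_k-1}$ satisfy $\lambda_{N_k-1}>\gamma/(1+\varpi(q_{N_k-1})/q_{N_k-1}^2)$ for all large $k$, i.e.\ $|\alpha-\tfrac{p_{N_k-1}}{q_{N_k-1}}|<\tfrac1{\gamma q_{N_k-1}^2}(1+\tfrac{\varpi(q_{N_k-1})}{q_{N_k-1}^2})$, and there are infinitely many such $k$. For the second inequality it suffices that $\lambda_n\le\gamma/(1-\varpi(q_n)/q_n^2)$ for all large $n$: when $n=N_k-1$ this again follows from $|\lambda_{N_k-1}-\gamma|\le\tfrac12\gamma\,\varpi(q_{N_k-1})/q_{N_k-1}^2$; for every other $n$ one has $\lambda_n\le\gamma<\gamma/(1-\varpi(q_n)/q_n^2)$, either by (i) (the window of partial quotients relevant to $\lambda_n$ lies inside a subshift word) or, when $n$ is within two places of a spike, by the elementary estimate of (iii). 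The step I expect to be the real obstacle is isolating from Hall's (and Freiman's) work the precise statement combining (ii) and (iii): that the relevant Cantor–set sum contains an interval with left endpoint $\gamma_0-4$ of length at least one, compatibly with the spike insertion, thereby certifying that $\gamma_0$ is the sharp threshold. Everything else is the by–now–standard mechanics of nested continued–fraction constructions; the single new ingredient is the remark that, because $\varpi\to\infty$, the inevitable $O(q^{-2})$ slack at each spike is negligible against $\varpi(q)/q^2$, which is exactly what upgrades plain admissibility to the quantitative two–sided conclusion.
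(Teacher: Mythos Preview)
Your approach is correct and essentially the same as the paper's: write $\gamma=A+\mu+\nu$ via a Hall--Freiman decomposition, build $\alpha$ from blocks $b_{m_i},\ldots,b_1,A,c_1,\ldots,c_{n_i}$, then choose $m_i$ (and afterwards $n_i$) so large that $|\lambda_{k_i}-\gamma|<\varpi(q_{k_i})/q_{k_i}^2$ at the spike indices, while bounded partial quotients force $\lambda_n<\gamma$ elsewhere. The only organisational difference is that the paper first invokes the one-sided theorem to dispose of $\gamma>5$ and then treats $\gamma_0\le\gamma\le 5$ directly with the full shift $F_3$ and spike $A=4$ (so no connectors are needed and your point~(iii) becomes the explicit estimate $3+[0;1,4,\overline{1,3}]+[0;\overline{1,3}]<\gamma_0$), whereas you handle all $\gamma\ge\gamma_0$ in one pass by letting the subshift $\mathcal F$ and the integer $A$ vary with $\gamma$.
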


\vskip+0.3cm

\textbf{Remark 1.}  In some sense, the result of Theorem \ref{thm_oneside} is optimal in order.  It is shown in \cite{MoshPit} 
(Example 1 to Theorem 2 from \cite{MoshPit}) that for  $\gamma \in \mathbb{Q}$  there exists
$\gamma_1>0$ such that there is no irrational $\alpha$ satisfying the following condition: the inequality (\ref{8}) has infinitely many solutions, while the inequality 
$$
\left|\alpha - \frac{p}{q}\right| < \frac{1}{\gamma q^2}\left(1 - \frac{\gamma_1}{q^2}\right)
$$
has at most finitely many rational solutions. A quite similar statement is also proven in \cite{BW} (see Theorem 1.6  from \cite{BW}).
\vskip+0.3cm
\textbf{Remark 2.} However, the lower bounds on $\gamma$ in Theorems \ref{thm_oneside} and \ref{thm_twoside} are not optimal. We conjecture that Theorem \ref{thm_twoside} holds for all elements of $\mathbb{L}$ and Theorem \ref{thm_oneside} holds for all $\gamma$ for which there exists $x$ such that $\lambda(x)=\gamma$ and the inequality \eqref{8} has infinitely many rational solutions.

\section{Notation, continued fractions and Hall-type theorems}
In our argument, we will exploit the properties of the decomposition of real numbers into ordinary continued fractions
\begin{equation}
\label{cf}
\alpha=[a_0;a_1, \ldots, a_n, \ldots], a_0\in\mathbb{Z}, a_{j\ge1}\in \mathbb{Z}_+.
\end{equation}
We denote the convergents to $\alpha$ as follows
\begin{equation}
\label{conv_alpha}
\frac{p_n(\alpha)}{q_n(\alpha)}:=[a_0;a_1,\ldots,a_n].    
\end{equation}
Sometimes we will use the notation $p_n(a_1,\ldots,a_n)$ and $q_n(a_1,\ldots,a_n)$ for the numerator and the denominator of the left-hand side of \eqref{conv_alpha} respectively.
The dependency of $p_n$ and $q_n$ on $\alpha$ or $a_1,\ldots,a_n$ will be omitted if it does not create ambiguity. 
We will frequently use the following notation: suppose that $\alpha$ has a decomposition \eqref{cf}, denote 
\begin{equation*}
\alpha_n^{*}:=[0;a_n,a_{n-1},\ldots,a_1]=\frac{q_{n-1}}{q_n}, \quad  \alpha_n=[a_n;a_{n+1},a_{n+2},\ldots].  
\end{equation*}
We also denote by $\lambda_n(\alpha)$ the following quantity
\begin{equation}
\label{lambda_def}
\lambda_n(\alpha):=[a_{n};a_{n+1},\ldots]+[0;a_{n-1},\ldots,a_1]=\alpha_{n} + \alpha_{n-1}^{*}.
\end{equation}
The following equality is known as Perron's formula:
\begin{equation}
\label{Perron}
\left|\alpha-\frac{p_n}{q_n}\right|=\frac{1}{\lambda_{n+1}(\alpha)q_n^2}.
\end{equation}
From \eqref{lambda_def} and \eqref{Perron} one can deduce that 
\begin{equation}
\label{Lag_eq}
\limsup\limits_{n\to\infty}\lambda_n(\alpha)=\lambda(\alpha).
\end{equation}

In our proofs, we rely on the properties of the sets of continued fractions with elements uniformly bounded from above. Denote
\begin{equation*}
F_k=\left\{[0;a_1,\ldots,a_n,\ldots]\ |\ a_i\le k\ \forall i\in\mathbb{Z}_{+}  \right\}.    
\end{equation*}
The following theorem was established by M. Hall \cite{Hall}.
\begin{thm}
\begin{equation}
\label{sum4}
F_4+F_4=[\sqrt{2}-1, 4\sqrt{2}-4]\approx [0.41421, 1.65685].
\end{equation}    
\end{thm}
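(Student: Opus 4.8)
The plan is to establish the two inclusions of \eqref{sum4} separately. The inclusion $F_4+F_4\subseteq[\sqrt2-1,4\sqrt2-4]$ is elementary: since $[0;a_1,a_2,\ldots]$ strictly decreases when an odd-indexed partial quotient is increased and strictly increases when an even-indexed one is increased, the minimum of $F_4$ is attained at $[0;\overline{4,1}]$ and the maximum at $[0;\overline{1,4}]$. Solving the two fixed-point quadratics gives $\min F_4=\frac{\sqrt2-1}{2}=:\mu$ and $\max F_4=2\sqrt2-2=:M$, whence $F_4\subseteq[\mu,M]$ and $F_4+F_4\subseteq[2\mu,2M]=[\sqrt2-1,4\sqrt2-4]$.

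For the reverse inclusion I would exhibit $F_4$ as a Cantor set with explicit cylinders. For an admissible word $a_1,\ldots,a_n$ (all digits $\leq4$) let $\Delta(a_1,\ldots,a_n)$ be the closed interval swept out by $[0;a_1,\ldots,a_n,\xi]$ as $\xi=[a_{n+1};a_{n+2},\ldots]$ ranges over all tails with digits $\leq4$, and set $A_n=\bigcup\Delta(a_1,\ldots,a_n)$. Then $A_0=[\mu,M]$, the sets $A_n$ are nested and compact, and $F_4=\bigcap_nA_n$. A standard compactness argument gives $F_4+F_4=\bigcap_n(A_n+A_n)$, so it suffices to prove $A_n+A_n\supseteq[\sqrt2-1,4\sqrt2-4]$ for every $n$. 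The key structural remark is that the extremal tails $\overline{1,4}$ and $\overline{4,1}$ are themselves admissible, so the leftmost and rightmost children $\Delta(a_1,\ldots,a_n,b)$ inherit the outer endpoints of their parent $\Delta(a_1,\ldots,a_n)$; refinement therefore never moves outer endpoints, and the base case $A_0+A_0=[2\mu,2M]$ is already the target interval.

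The heart of the argument is an induction: assuming $A_n+A_n$ covers the target, I would show $A_{n+1}+A_{n+1}$ still does. Now $A_{n+1}+A_{n+1}$ is the union of the Minkowski sums $\Delta(u,a)+\Delta(v,b)$ over children of the level-$n$ cylinders, each such sum being an interval, and two consecutive such intervals abut or overlap exactly when the gap between two child cylinders does not exceed the length of the partner child used to bridge it. Thus the inductive step is a gap-filling statement: every gap between consecutive children of a cylinder is dominated by the length of a suitable child of the paired cylinder. In the language of Cantor sets this is precisely the condition $\tau(F_4)^2\geq1$ of Newhouse's gap lemma, with $\tau$ denoting thickness, and it is equivalent to Hall's original direct gap-chaining. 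The subtle feature is that a gap of a \emph{short} cylinder must be bridged not locally but by a \emph{long} (first digit $1$) child of its partner, so the estimate must be organised globally, along the ordering of all gaps of $F_4$ by size.

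Finally I would reduce these gap-versus-length inequalities to explicit continued-fraction data. Using $|p_nq_{n-1}-p_{n-1}q_n|=1$, the length of $\Delta(a_1,\ldots,a_n)$ equals $\frac{|\xi_{\max}-\xi_{\min}|}{(q_n\xi_{\max}+q_{n-1})(q_n\xi_{\min}+q_{n-1})}$, and the inter-child gaps have the same shape with $\xi_{\max},\xi_{\min}$ replaced by adjacent tail endpoints. The recursion $q_n=a_nq_{n-1}+q_{n-2}$ makes the distortion factors $q_n\xi+q_{n-1}$ cancel between a gap and its neighbouring bridges, reducing everything to finitely many inequalities among quantities $[0;a_i,\ldots,a_1]$ and $[a_i;a_{i+1},\ldots]$ with digits in $\{1,2,3,4\}$. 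I expect this uniform verification to be the main obstacle: one must control the Möbius distortion so that the worst ratio of a gap to its bridging interval never exceeds $1$ across all cylinders. This is exactly where the digit bound $4$ is essential — for digits bounded by $3$ the analogous thickness product drops below $1$ and $F_3+F_3$ fails to be an interval — so the real content is showing that digits up to $4$ bring the thickness product up to the threshold $1$.
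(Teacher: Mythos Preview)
The paper does not give its own proof of this statement: it is quoted verbatim as Hall's theorem with the citation \cite{Hall} and then used as a black box (via Corollary~\ref{cor_repr6}). So there is no in-paper argument to compare your proposal against.

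That said, your sketch is the standard route to Hall's theorem. The easy inclusion is correct as you wrote it (with $\min F_4=[0;\overline{4,1}]=\tfrac{\sqrt2-1}{2}$ and $\max F_4=[0;\overline{1,4}]=2\sqrt2-2$). For the hard inclusion, your Cantor/cylinder picture and the reduction to a gap-versus-bridge inequality is exactly Hall's method; the Newhouse thickness reformulation you mention is the modern packaging of the same idea. One caution: the clean induction ``$A_{n+1}+A_{n+1}\supseteq[\sqrt2-1,4\sqrt2-4]$ given $A_n+A_n\supseteq[\sqrt2-1,4\sqrt2-4]$'' does not go through symmetrically, because at a given stage the correct move may be to refine only one of the two cylinders (the longer one) while keeping the other fixed --- this is precisely Hall's asymmetric descent, and it is what your remark about bridging a short cylinder's gap with a long child of the \emph{partner} is pointing at. If you reorganise the induction accordingly (always split the longer of the two current cylinders, and check that each resulting gap is at most the length of the unsplit partner), the finitely many ratio inequalities you describe do hold with digits $\leq 4$, and the argument closes. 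Your final comment about $F_3$ is consistent with what the paper records: $F_3+F_3$ is not a full interval, only contains one (Freiman's result quoted just after Hall's theorem).
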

As the length of the interval at the right-hand side of \eqref{sum4} is greater than one, one can easily deduce the following statement
\begin{cor}
\label{cor_repr6}
Every real number greater than $6$ can be represented as a sum
\begin{equation}
\label{repr6}
c+[0;b_1,\ldots]+[0;c_1,\ldots]
\end{equation}
where $c\ge 5$ is an integer and the partial quotients $b_i, c_i$ for $i=1,2,\ldots$ do not exceed $4$.
\end{cor}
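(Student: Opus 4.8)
The plan is to derive Corollary~\ref{cor_repr6} as an immediate pigeonhole consequence of Hall's theorem \eqref{sum4}. Write $I:=[\sqrt{2}-1,\,4\sqrt{2}-4]$ for the interval appearing there, so that $F_4+F_4=I$, and observe that its length $(4\sqrt{2}-4)-(\sqrt{2}-1)=3\sqrt{2}-3$ exceeds $1$. This single quantitative fact is the whole point: it is what makes the index $4$ (rather than a smaller one) usable here.

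Given a real number $x>6$, I would put $c:=\lceil x-4\sqrt{2}+4\rceil$ and verify two things. First, $x-c\in I$: from $x-4\sqrt{2}+4\leq c<x-4\sqrt{2}+5$ (since $t\leq\lceil t\rceil<t+1$ for every real $t$) together with $x-4\sqrt{2}+5\leq x-\sqrt{2}+1$ (equivalent to $16\leq 18$), one gets $\sqrt{2}-1\leq x-c\leq 4\sqrt{2}-4$. Second, $c\geq 5$: since $x>6$ we have $x-4\sqrt{2}+4>10-4\sqrt{2}=4.343\ldots>4$, and the ceiling of a real number exceeding $4$ is at least $5$. Now, because $x-c\in I=F_4+F_4$, the definition of $F_4$ provides expansions $x-c=[0;b_1,b_2,\ldots]+[0;c_1,c_2,\ldots]$ with all $b_i\leq 4$ and all $c_i\leq 4$; adding $c$ back yields $x=c+[0;b_1,b_2,\ldots]+[0;c_1,c_2,\ldots]$, a representation of the form \eqref{repr6} with $c\geq 5$ an integer, as required.

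I do not anticipate a real obstacle: the argument is one line of arithmetic on top of \eqref{sum4}. The only thing to be attentive to is the numerology, namely checking that the threshold $x>6$ is enough to keep $\lceil x-4\sqrt{2}+4\rceil$ strictly above $4$; in fact $x>4\sqrt{2}$ would already suffice, so $6$ is a safe and clean choice of bound.
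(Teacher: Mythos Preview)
Your proof is correct and follows exactly the route the paper has in mind: the paper simply remarks that the interval in \eqref{sum4} has length greater than~$1$ and calls the corollary an easy consequence, and you have made that pigeonhole/translation step explicit via the choice $c=\lceil x-4\sqrt{2}+4\rceil$. There is nothing to add.
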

Since the foundational paper \cite{Hall} by Hall was published, several generalizations of the property \eqref{sum4} have been found. We need two of them. The first one is due to Freiman and Judin \cite{Freiman_Judin}.
\begin{thm}
Any real number in the interval $[5-\sqrt{21}, \sqrt{21}-3]\approx [0.41742, 1.58257]$ can be written in the form $[0;b_1,\ldots]+[0;c_1,\ldots]$, where the partial quotients $b_i, c_i$ for $i=1,2,\ldots$ do not exceed $4$ and where no ordered pair $(b_i, b_{i+1})$ or $(c_i,c_{i+1})$ is ever equal to $(1,4)$ or $(2,4)$.
\end{thm}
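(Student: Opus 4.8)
This statement is the constrained counterpart of Hall's identity \eqref{sum4}. Write $F'=\{[0;b_1,b_2,\ldots]:\ b_i\le 4 \text{ and no pair } (b_i,b_{i+1}) \text{ equals } (1,4) \text{ or } (2,4)\}$; the claim is that $F'+F'\supseteq[5-\sqrt{21},\,\sqrt{21}-3]$. I would begin by noting that this target interval is forced. Since a partial quotient $4$ may not follow a $1$ or a $2$, the largest element of $F'$ is $[0;\overline{1,3}]=\tfrac{\sqrt{21}-3}{2}$, and the smallest is $[0;4,\overline{1,3}]=\tfrac{2}{5+\sqrt{21}}=\tfrac{5-\sqrt{21}}{2}$. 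Hence $[5-\sqrt{21},\sqrt{21}-3]=[\,2\min F',\,2\max F'\,]$, so $F'+F'$ is trivially contained in it and the real content of the statement is that $F'+F'$ has \emph{no gaps}.

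Next I would set this into the standard Cantor-set framework. View $F'$ as the attractor of a graph-directed system of the maps $x\mapsto 1/(a+x)$: the ``states'' are the admissible last partial quotients, from states $1$ and $2$ one may append only $1,2,3$, and from states $3$ and $4$ one may append any of $1,2,3,4$. For an admissible finite word $w=b_1\cdots b_n$ let $I(w)$ be the closed interval swept out by $[0;b_1,\ldots,b_n,x]$ as $x$ runs over the admissible tails compatible with the ending digit $b_n$; its endpoints come from plugging in the extremal admissible tails ($[0;\overline{1,3}]$ at the top, and $[0;3,\overline{1,3}]$ or $[0;4,\overline{1,3}]$ at the bottom, according to whether $b_n\in\{1,2\}$ or $b_n\in\{3,4\}$). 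One has $I(wa)\subseteq I(w)$ for every admissible extension $a$, the union of the children $I(wa)$ over admissible $a$ is $I(w)$ minus its gaps, and $\overline{F'}=\bigcap_n\bigcup_{|w|=n}I(w)$; consequently $F'+F'$ is the nested intersection over $n$ of the product intervals $I(w)+I(w')$ with $|w|=|w'|=n$.

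The engine is Hall's covering lemma. To realize a prescribed $t\in[5-\sqrt{21},\sqrt{21}-3]$ as $\xi+\eta$ with $\xi,\eta\in\overline{F'}$ it suffices to construct nested product intervals $I(w_k)+I(w_k')\ni t$ with $|w_k|=|w_k'|\to\infty$, and the inductive step reduces to a \emph{chaining condition}. List the children of $I(w)$ in left-to-right order (four of them, or only three when the last digit of $w$ is $1$ or $2$, since then a $4$ is forbidden) and likewise the children of $I(w')$; given $t\in I(w)+I(w')$ I want admissible digits $a,a'$ with $t\in I(wa)+I(w'a')$. This is possible provided that whenever two consecutive children of $I(w)$ leave a gap, the child of $I(w')$ adjacent to the matching position is long enough that sliding $\eta$ across it covers that gap in $\xi+\eta$, symmetrically with the roles reversed, and together with the analogous comparisons at the two outer ends of $I(w)$ and $I(w')$. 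In other words, every gap of one factor must be dominated by an adjacent bridge of the other; when this holds, $\bigcup_{a,a'}\bigl(I(wa)+I(w'a')\bigr)$ covers $I(w)+I(w')$, and one simply keeps the piece containing $t$.

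Finally I would verify the chaining inequalities by a finite computation. The maps $x\mapsto 1/(a+x)$ are fractional-linear with bounded distortion on $[0,\tfrac{\sqrt{21}-3}{2}]$, so the ratios (gap between consecutive children)/(length of an adjacent child) and (outer gap)/(length of an extreme child) for $I(w)$ depend, up to controlled factors, only on the last digit of $w$, i.e.\ on the state; thus the chaining condition amounts to finitely many explicit inequalities, one for each ordered pair of states, which one settles by computing the relevant cylinder endpoints exactly (this is also where the precise values $5-\sqrt{21}$ and $\sqrt{21}-3$ enter, as the choices that make every inequality hold). I expect the main obstacle to sit exactly here: from states $1$ and $2$ the digit $4$ is forbidden, so $I(w)$ then has only three children and a comparatively wide gap opens along its bottom edge, and one must check that the partner factor, in \emph{every} admissible state it can occupy, still supplies a bridge long enough to span that enlarged gap. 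That is the one place where the constraint ``no $(1,4)$ or $(2,4)$'' could conceivably break the connectivity of the sumset, and carrying out this verification — rather than the soft formalism surrounding it — is the real work.
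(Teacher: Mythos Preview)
The paper does not prove this theorem; it is quoted from Freiman and Judin and used as a black box, so there is no proof in the paper to compare your proposal against.

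On its own terms, your plan follows the standard Hall architecture and is correctly oriented: the identifications $\max F'=[0;\overline{1,3}]=\tfrac{\sqrt{21}-3}{2}$ and $\min F'=[0;4,\overline{1,3}]=\tfrac{5-\sqrt{21}}{2}$ are right, the graph-directed description (states $1,2$ admit children $\{1,2,3\}$, states $3,4$ admit children $\{1,2,3,4\}$) is accurate, and Hall's covering lemma is the appropriate engine. What is missing is the actual content: the finite table of gap-versus-bridge inequalities that you describe but never write down or verify. Since the entire difficulty of constrained Hall-type theorems lies precisely in that numerical verification --- and since you yourself flag the likely failure point (the enlarged bottom gap when the terminal digit is $1$ or $2$) without resolving it --- the proposal is a skeleton rather than a proof. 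If you want a self-contained argument rather than a citation, that computation has to be carried out explicitly.
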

\begin{cor}
\label{cor_repr5}
Any real number $\gamma\in[10-\sqrt{21}, 6]\approx[5.41742, 6]$ can be represented as a sum
\begin{equation}
\label{repr5}
\gamma=5+[0;b_1,\ldots]+[0;c_1,\ldots],
\end{equation}
where the partial quotients $b_i, c_i$ for $i=1,2,\ldots$ do not exceed $4$ and where no ordered pair $(b_i, b_{i+1})$ or $(c_i,c_{i+1})$ is ever equal to $(1,4)$ or $(2,4)$.
\end{cor}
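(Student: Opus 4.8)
The plan is to obtain this corollary as an immediate consequence of the theorem of Freiman and Judin stated just above, by the same shifting device that passes from Hall's theorem to Corollary~\ref{cor_repr6}. First I would fix a real number $x$ with $10-\sqrt{21}\leq x\leq 6$ and set $y:=x-5$; as $x$ runs through $[\,10-\sqrt{21},\,6\,]$, the number $y$ runs through $[\,5-\sqrt{21},\,1\,]$.

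The single point to verify is that $[\,5-\sqrt{21},\,1\,]$ is contained in the interval $[\,5-\sqrt{21},\,\sqrt{21}-3\,]$ occurring in the Freiman--Judin theorem, i.e. that $1\leq\sqrt{21}-3$; this is just the elementary inequality $\sqrt{21}\geq 4$, equivalently $21\geq 16$. Granting it, the Freiman--Judin theorem applies to $y$ and yields sequences of partial quotients $(b_i)_{i\geq 1}$ and $(c_i)_{i\geq 1}$, each bounded above by $4$ and containing no consecutive pair equal to $1,4$ or to $2,4$, such that $y=[0;b_1,b_2,\ldots]+[0;c_1,c_2,\ldots]$. Adding $5$ to both sides gives
\[
x=5+[0;b_1,b_2,\ldots]+[0;c_1,c_2,\ldots],
\]
which is precisely a representation of the form \eqref{repr5}, and the proof is complete.

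I do not expect any genuine obstacle here: the entire content of the statement is already packaged in the Freiman--Judin theorem, and the corollary merely rewrites it in the shifted form $5+(\,\cdot\,)$ that is convenient for the constructions used later to build the approximating numbers $\alpha$. The only remark worth recording is that the upper bound $6$ could in fact be pushed up to $2+\sqrt{21}$ by exploiting the full length of the Freiman--Judin interval, but the weaker bound $6$ is all that is needed to complement Corollary~\ref{cor_repr6}, which already covers every real number greater than $6$.
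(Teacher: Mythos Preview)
Your argument is correct and is exactly the immediate shifting trick the paper has in mind: the corollary is stated directly after the Freiman--Judin theorem with no separate proof, precisely because subtracting $5$ lands one inside $[5-\sqrt{21},\sqrt{21}-3]$. Your remark about the unused headroom up to $2+\sqrt{21}$ is also accurate but, as you note, unnecessary for the application.
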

The second result we will apply was established by Freiman \cite{Freiman} and Astels \cite{astels}.
\begin{thm}
The set $F_3+F_3$ contains the interval
\begin{equation*}
[[0;\overline{3,1}]+[0;2,\overline{1,3}], [0;\overline{1,3}]+[0;1,2,\overline{1,3}]]\approx[0.62202,1.52752].   
\end{equation*}
\end{thm}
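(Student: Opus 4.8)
The plan is to realise every $t$ in the stated interval $[L,R]$, where $L=[0;\overline{3,1}]+[0;2,\overline{1,3}]$ and $R=[0;\overline{1,3}]+[0;1,2,\overline{1,3}]$, as a sum $\xi+\eta$ with $\xi,\eta\in F_3$, by building the continued fractions of $\xi$ and $\eta$ one partial quotient at a time. For a finite string $\sigma=(a_1,\dots,a_n)$ with every $a_i\le 3$, write $I_\sigma$ for the cylinder $\{[0;a_1,\dots,a_n,c_1,c_2,\dots]\ |\ c_i\le 3\}$; this is a closed interval whose two endpoints are obtained by completing the tail with the extremal admissible periodic strings $\overline{1,3}$ and $\overline{3,1}$ (the same strings that produce $\max F_3=[0;\overline{1,3}]$ and $\min F_3=[0;\overline{3,1}]$). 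The Minkowski sum $I_\sigma+I_\tau$ of two such cylinders is again a closed interval. I would construct, for a given $t\in[L,R]$, a nested sequence of boxes $I_{\sigma_n}\times I_{\tau_n}$ in which $\sigma_{n+1},\tau_{n+1}$ are obtained from $\sigma_n,\tau_n$ by appending a single partial quotient $\le 3$ to exactly one of them, subject to the invariant $t\in I_{\sigma_n}+I_{\tau_n}$. Since the diameters of cylinders shrink to $0$ (the denominators $q_n$ grow at least like Fibonacci numbers), the nested boxes contract to a single point $(\xi,\eta)\in F_3\times F_3$, and the invariant forces $\xi+\eta=t$.

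The heart of the matter is the single covering step: assuming $t\in I_\sigma+I_\tau$, one must append one digit $\le 3$ to $\sigma$ or to $\tau$ and still have $t$ in the sum. Appending a digit to $\sigma$ replaces $I_\sigma$ by one of its three subcylinders $I_{\sigma 1},I_{\sigma 2},I_{\sigma 3}$; because only the digits $1,2,3$ are allowed these three subintervals leave two uncovered Cantor gaps inside $I_\sigma$, so refining a single coordinate may push $t$ out of the sum. The key claim I must establish is therefore a Covering Lemma: for $t$ in the admissible range, whenever $t\in I_\sigma+I_\tau$ there is a one-digit extension $\sigma'$ of $\sigma$ with $t\in I_{\sigma'}+I_\tau$, or a one-digit extension $\tau'$ of $\tau$ with $t\in I_\sigma+I_{\tau'}$. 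Concretely, the union $\bigcup_i(I_{\sigma i}+I_\tau)$ covers $I_\sigma+I_\tau$ exactly when each gap of $I_\sigma$ is no longer than $|I_\tau|$; if some gap is longer, the residual uncovered piece must be recaptured by refining $\tau$ instead, which succeeds precisely when the gaps of $I_\tau$ are short relative to $|I_\sigma|$. This is where the length of the target interval is dictated: the covering closes up on $[L,R]$ and fails beyond it, consistent with the fact that $F_3$ is too thin for $F_3+F_3$ to fill the whole of $[2\min F_3,\,2\max F_3]$. The endpoints $L$ and $R$ arise as the extreme positions at which a refinement becomes tangent to the target, namely the sums of cylinder endpoints built from the boundary strings $\overline{3,1}$, $2,\overline{1,3}$ and $1,2,\overline{1,3}$.

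To prove the Covering Lemma I would renormalise using the self-similarity of continued fractions. The configuration of the three subcylinders and two gaps inside a cylinder $I_\sigma$ is the image of the base configuration of $F_3$ in $[\min F_3,\max F_3]$ under an orientation-reversing fractional-linear map of uniformly bounded distortion; consequently the covering condition depends only on a bounded amount of scale-invariant data, essentially the position of $t$ inside $I_\sigma+I_\tau$ together with the ratio $|I_\sigma|/|I_\tau|$, both of which range over a compact set. The required covering inequality is a closed condition on these parameters, so its verification reduces to checking finitely many extremal configurations, and the worst cases are exactly the boundary sums that produce $L$ and $R$. I expect this finite but delicate verification — tracking the ratios of gap lengths to adjacent subcylinder lengths under the allowed digits $1,2,3$, and pinning down the precise extremal strings — to be the main obstacle; the limiting argument of the first paragraph is then routine.
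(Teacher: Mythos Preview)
The paper does not prove this theorem; it is quoted as a result of Freiman and only its corollary is used. So there is no in-paper argument to compare against.

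Your outline is the classical Hall strategy and is, in broad terms, how Freiman's result is obtained: cylinder intervals $I_\sigma$, Minkowski sums $I_\sigma+I_\tau$, refining one coordinate at a time, and a Covering Lemma guaranteeing that at least one one-digit extension keeps $t$ inside the sum. That structural picture is correct. However, what you have written is a plan rather than a proof. The entire content of the theorem is the Covering Lemma, and you explicitly leave its verification undone (``I expect this finite but delicate verification \ldots\ to be the main obstacle''). The compactness/renormalisation paragraph does not close the gap: the assertion that a closed condition on a compact parameter space ``reduces to checking finitely many extremal configurations'' is not valid without some monotonicity or further structure, which you do not supply. In the actual proofs of Hall-type results the verification is done via explicit numerical inequalities on the ratios of gap lengths to adjacent subcylinder lengths at each level; for $F_3$ these ratios are too large for the naive Hall argument to cover all of $[2\min F_3,2\max F_3]$, and one must both restrict to the subinterval $[L,R]$ and track carefully which tails are admissible after a refinement. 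You correctly anticipate that the endpoints $L$ and $R$ arise from extremal strings, but nothing in your sketch singles out the specific tails $\overline{3,1}$, $2,\overline{1,3}$ and $1,2,\overline{1,3}$ or explains why the covering first fails precisely there. Until that computation is carried out, the proposal remains a (correct) description of the method rather than a proof.
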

\begin{cor}
\label{cor_repr4}
Any real number $\gamma\in\left[4+[0;\overline{3,1}]+[0;2,\overline{1,3}], 10-\sqrt{21}\right]\approx [4.62202, 5.41742]$ can be represented as a sum
\begin{equation}
\label{repr4}
\gamma=4+[0;b_1,\ldots]+[0;c_1,\ldots]
\end{equation}
where the partial quotients $b_i, c_i$ for $i=1,2,\ldots$ do not exceed $3$.
\end{cor}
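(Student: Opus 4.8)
The plan is to deduce this directly from Freiman's theorem stated immediately above, in exactly the same spirit as Corollaries \ref{cor_repr6} and \ref{cor_repr5} follow from the preceding Hall-type theorems: one translates the target interval by the integer $4$ and checks that it then lies inside the interval $F_3+F_3$ supplied by Freiman.

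Concretely, let $x$ be any real number with $4+[0;\overline{3,1}]+[0;2,\overline{1,3}]\le x\le 10-\sqrt{21}$, and put $y:=x-4$. Then $y$ ranges over the interval with endpoints $[0;\overline{3,1}]+[0;2,\overline{1,3}]$ and $6-\sqrt{21}$. The left endpoint here is, by definition, exactly the left endpoint of Freiman's interval, so the only point requiring verification is that the right endpoint does not exceed the right endpoint of Freiman's interval, i.e. that $6-\sqrt{21}\le [0;\overline{1,3}]+[0;1,2,\overline{1,3}]$. Numerically $6-\sqrt{21}=1.41742\ldots<1.52752\ldots$; a fully rigorous check amounts either to bounding $\sqrt{21}$ from below by a rational (a denominator of a few hundred suffices), or to comparing the eventually periodic continued fraction expansions of the two quadratic irrationalities involved. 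Granting this, $y$ lies in $F_3+F_3$, so by Freiman's theorem $y=[0;b_1,\ldots]+[0;c_1,\ldots]$ with all partial quotients $b_i,c_i\le 3$, whence $x=4+[0;b_1,\ldots]+[0;c_1,\ldots]$ has the required form.

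There is essentially no obstacle here beyond this elementary interval-containment bookkeeping — all the content of the statement is carried by Freiman's theorem. The only mild subtlety, as noted, is confirming that $6-\sqrt{21}\le[0;\overline{1,3}]+[0;1,2,\overline{1,3}]$ is a genuine inequality rather than an accidental near-equality, which the explicit decimal estimates (or a short continued-fraction comparison) settle immediately.
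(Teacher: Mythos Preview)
Your proposal is correct and is exactly the intended argument: the paper states this corollary without proof, as an immediate consequence of Freiman's theorem obtained by translating the target interval by $4$ and observing that $[\,[0;\overline{3,1}]+[0;2,\overline{1,3}],\,6-\sqrt{21}\,]\subset[\,[0;\overline{3,1}]+[0;2,\overline{1,3}],\,[0;\overline{1,3}]+[0;1,2,\overline{1,3}]\,]$. There is nothing to add.
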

We will denote the continued fractions $[0;b_1,\ldots]$ and $[0;c_1,\ldots]$ in the representations \eqref{repr6}, \eqref{repr5} and \eqref{repr4} by $\mu$ and $\nu$ respectively.

\section{Proofs}
We start with the proof of Theorem \ref{thm_oneside}.
\begin{proof}
First of all, note that one can consider the statement of Theorem \ref{thm_oneside} for $\frac{p}{q}=\frac{p_n(\alpha)}{q_n(\alpha)}$ only. Indeed, if $\frac{p}{q}$ is not a convergent to $\alpha$, then, by Legendre’s theorem, one has
\begin{equation*}
\left|\alpha-\frac{p}{q}\right|\ge\frac{1}{2q^2}>\frac{1}{\gamma q^2}
\end{equation*}
thus violating \eqref{8}. Hence, in order to prove Theorem \ref{thm_oneside}, it is sufficient to show that the inequality 
\begin{equation*}
%\label{8_adv}
\left|\alpha - \frac{p_n}{q_n}\right| < \frac{1}{\gamma q_n^2}
\end{equation*}
is satisfied for infinitely many $n$, while the inequality
\begin{equation*}
\left|\alpha- \frac{p_n}{q_n}\right|>\frac{1}{\gamma q_n^2}\left(1 - \frac{\varpi(q_n)}{q_n^2}\right)
\end{equation*}
is satisfied for all $n$ large enough. 
By Perron's formula \eqref{Perron}, this is equivalent to the fact that
\begin{equation}
\label{lambda_n_gt}
\lambda_n(\alpha)>\lambda(\alpha)
\end{equation}
for infinitely many $n$ and
\begin{equation*}
%\label{lambda_n_lt_0}
\lambda_{n+1}(\alpha)<\lambda(\alpha)\left(1-\frac{\varpi(q_n)}{q^2_n}\right)^{-1}
\end{equation*}
for all $n$ large enough.

Next, note that if the inequality \eqref{8_conv} is satisfied for some function $\varpi(n)$ then it holds for any function $\varpi'(n)\ge\varpi(n)$. Hence, without loss of generality we can assume that $\frac{\varpi(q_n)}{q_n^2}<\frac{1}{2}$ for all $n\ge 1$. As
\begin{equation*}
\lambda(\alpha)\left(1-\frac{\varpi(q_n)}{q^2_n}\right)^{-1}>\lambda(\alpha)\left(1+\frac{\varpi(q_n)}{q^2_n}\right)>
\lambda(\alpha)+\frac{\varpi(q_n)}{q^2_n},
\end{equation*}
it is sufficient to show that 
\begin{equation}
\label{lambda_n_lt}
\lambda_{n+1}(\alpha)<\lambda(\alpha)+\frac{\varpi(q_n)}{q^2_n}
\end{equation}
for all $n$ large enough.

Now we start the proof of Theorem \ref{thm_oneside}.
Suppose that $5<\gamma\le 10-\sqrt{21}$. Note that
\begin{equation*}
4+[0;4,\overline{1,3}]+[0;\overline{1,3}]=5.    
\end{equation*}
Thus, there exists $\varepsilon=\varepsilon(\gamma)>0$ and a constant $n_0\in\mathbb{N}$ such that if
\begin{equation}
\label{31_pattern_in_x}
\alpha=[a_0;a_1,\ldots,\underbrace{3,1,3,1,\ldots,3,1}_{n_0 \text{ pairs}},4^*,4,\underbrace{1,3,1,3,\ldots,1,3}_{n_0 \text{ pairs}},\ldots]    
\end{equation}
for some $\alpha\in\mathbb{R}\setminus\mathbb{Q}$, one has 
\begin{equation}
\label{lambda_01}
\max\left(\lambda_r(\alpha),\lambda_{r+1}(\alpha)\right)<\gamma-\varepsilon.
\end{equation}
The star in \eqref{31_pattern_in_x} denotes the $r$-th partial quotient. Denote  by $C$ the sequence
\begin{equation*}
C:=\underbrace{3,1,3,1,\ldots,3,1}_{n_0 \text{ pairs}},4,4,\underbrace{1,3,1,3,\ldots,1,3}_{n_0 \text{ pairs}}
\end{equation*}
As $5<\gamma\le 10-\sqrt{21}$, due to Corollary \ref{cor_repr4}, $\gamma$ can be represented as a sum \eqref{repr4}. Denote
\begin{equation}
\label{Bnm_def}
B^{n}_m:=b_m,b_{m-1},\ldots, b_1,4,c_1,c_2,\ldots,c_n.  
\end{equation}
\begin{lem}
\label{lem_k_i}
Let $n_k$ and $m_k$ be arbitrary sequences of odd positive integers growing to infinity. For any $5<\gamma\le 10-\sqrt{21}$ and any $\alpha$ of the form
\begin{equation}
\label{alpha_in_blocks}
\alpha:=[0;C,B^{n_1}_{m_1},C,B^{n_2}_{m_2},C,\ldots]
\end{equation}
one has $\lambda(\alpha)=\gamma$. Moreover, for any $5<\gamma\le 10-\sqrt{21}$ there exists a positive constant $\varepsilon=\varepsilon(\gamma)$ such that for any $\alpha$ of the form \eqref{alpha_in_blocks} and any $n\in\mathbb{N}$ such that $a_n(\alpha)$ is not $4$ in some block $B^{n_i}_{m_i}$, the inequality $\lambda_{n}(\alpha)<\gamma-\varepsilon$ holds.
\end{lem}
Given $\alpha$ in the form \eqref{alpha_in_blocks}, we will denote by $k_i$ the sequence of indices of $4$-s in the blocks  $B^{n_i}_{m_i}$.
\begin{proof}
Denote the partial quotients of $\alpha$ by $a_m$. One can easily see that
\begin{equation*}
\lim\limits_{i\to\infty}\lambda_{k_i}(\alpha)=\gamma.  
\end{equation*}
On the other hand, if $n\ne k_i$ and $a_n\le 3$, then $\lambda_{n}(\alpha)<5$. Finally, if $a_n=4$ and $a_n$ lies in some block $C$, due to \eqref{lambda_01}, we have $\lambda_{n}(\alpha)<\gamma-\varepsilon$. Thus we established that 
\begin{equation*}
\limsup\limits_{n\to\infty}\lambda_{n}(\alpha)=\lim\limits_{i\to\infty}\lambda_{k_i}(\alpha)
\end{equation*}
Applying \eqref{Lag_eq}, we obtain that $\lambda(\alpha)=\gamma$ thus finishing the proof.
\end{proof}
\begin{lem}
\label{leftineq}
Given $5<\gamma\le 10-\sqrt{21}$ and $\alpha$ of the form \eqref{alpha_in_blocks}. For each $i\in\mathbb{N}$ one has $\alpha_{k_i-1}^{*}>\mu$.
\end{lem}
\begin{proof}
The first $m_i$ partial quotients of $\alpha_{k_i-1}^{*}$ and $\mu$ coincide by definition of $\alpha$. Next,
\begin{equation*}
\begin{split}
\mu=[0;b_1,\ldots,b_{m_i},\ldots]\le& [0;b_1,\ldots,b_{m_i},3,1,\ldots,3,1,\overline{{\bf 3},1}]\\<&
[0;b_1,\ldots,b_{m_i},\underbrace{3,1,\ldots,3,1}_{n_0 \text{ pairs}},{\bf 4},\ldots]=\alpha_{k_i-1}^{*}.
\end{split}
\end{equation*}
The last inequality is true because $m_i$ is odd and therefore the first different partial quotient (highlighted in bold in the above inequality) has even index.
\end{proof}
\begin{cor}
\label{corol_m_n}
Fix $i\ge 1$. Suppose that the odd numbers $m_i, n_{i-1},m_{i-1},\ldots,n_1 ,m_1$ are already defined. There exists a number $n^{0}_i=n^{0}_i(m_i, n_{i-1},m_{i-1},\ldots,n_1 ,m_1)$ such that for any $n_i\ge n^{0}_i$ and any $\alpha$ of the form \eqref{alpha_in_blocks} one has
\begin{equation*}
\lambda_{k_i}(\alpha)>\gamma.    
\end{equation*}
\end{cor}
\begin{proof}
It follows from \eqref{lambda_def} that the condition of the corollary is equivalent to
\begin{equation*}
\alpha_{k_i-1}^{*}-\mu>\nu+4-\alpha_{k_i}.
\end{equation*}
Denote $\alpha_{k_i-1}^{*}-\mu=\varepsilon$ which is greater than zero by Lemma \ref{leftineq}. Note that $\alpha_{k_i}$ tends to $\nu + 4$ as $n_i$ goes to infinity, hence $\left|\nu+4-\alpha_{k_i}\right|<\varepsilon$ for all $n_i$ large enough. Thus there exists a number $n^0_i$ such that 
\begin{equation}
\label{col4_impl}
n_i\ge n^0_i\quad\implies\quad \left|\nu+4-\alpha_{k_i}\right|<\alpha_{k_i-1}^{*}-\mu.
\end{equation}
\end{proof}
Consider $\alpha$ of the form \eqref{alpha_in_blocks} such that $n_i\ge n^{0}_i(m_i,n_{i-1},m_{i-1},\ldots,n_1 ,m_1)$ for all $i=1,2,\ldots$. Combining Corollary \ref{corol_m_n} and Lemma \ref{lem_k_i}, we ensure that the inequality \eqref{lambda_n_gt} is satisfied for infinitely many $n$.  Due to Lemma \ref{lem_k_i}, to prove \eqref{lambda_n_lt} it is sufficient to show that 
\begin{equation}
\label{lambda_k_i_lt}
\lambda_{k_i}(\alpha)<\lambda(\alpha)+\frac{\varpi(q_{k_i-1})}{q^2_{k_i-1}}=\gamma+\frac{\varpi(q_{k_i-1})}{q^2_{k_i-1}}.
\end{equation}
for all $i$ large enough.

\begin{lem}
Given $\alpha$ of the form \eqref{alpha_in_blocks} suppose that $n_i\ge n^{0}_i(m_i,n_{i-1},m_{i-1},\ldots,n_1 ,m_1)$ for some $i\ge 1$. If 
\begin{equation}
\label{red_to_2_den}
\alpha^{*}_{k_i-1}-\mu<\frac{\varpi(q_{k_{i}-1})}{2q^2_{k_{i}-1}}
\end{equation}
then \eqref{lambda_k_i_lt} is satisfied.
\end{lem}
\begin{proof}
Recall that $4+\mu+\nu=\lambda(\alpha)=\gamma$. Thus, applying \eqref{col4_impl} and \eqref{red_to_2_den}, we get
\begin{equation}
\lambda_{k_i}(\alpha)-\lambda(\alpha)=\alpha_{k_i}-(4+\nu)+\alpha^{*}_{k_i-1}-\mu\le 2(\alpha^{*}_{k_i-1}-\mu)<\frac{\varpi(q_{k_{i}-1})}{q^2_{k_{i}-1}}.
\end{equation}
\end{proof}

To complete the proof of Theorem \ref{thm_oneside} for the case $5<\gamma\le 10-\sqrt{21}$ it remains to verify the inequality \eqref{red_to_2_den}.
We recall the following property of continuants (see \cite[Lemma 3.2]{ABD} for reference): for any two natural numbers $m<n$ one has
\begin{equation}
\label{cont2}
q_n(a_1,\ldots,a_n)\le 2 q_m(a_1,\ldots,a_m)q_{n-m}(a_{m+1},\ldots,a_n).    
\end{equation}
Recall also that
\begin{equation*}
\alpha=[0;a_1,\ldots,a_{k_i-m_i-1},a_{k_i-m_i}=b_{m_i},\ldots,a_{k_i-1}=b_1,a_{k_i}=4,\ldots].
\end{equation*}
Applying \eqref{cont2} to $\alpha$ with $n=k_i-1$ and $m=n-m_i$, one gets
\begin{equation}
\label{after_2_split}
q_{k_i-1}\le 2 q_{k_i-m_i-1}q_{m_i}(b_{m_i},\ldots,b_1)=2 q_{k_i-m_i-1}q_{m_i}(b_{1},\ldots,b_{m_i}),
\end{equation}
where the last equality follows from well-known symmetry of continuants (see again \cite[Lemma 3.2]{ABD}). Since the first $m_i$ partial quotients in continued fraction expansions of the numbers $\alpha^{*}_{k_i-1}$ and $\mu$ coincide and are equal to $b_1,\ldots, b_{m_i}$, a classical estimate (see \cite[Lemma 5.1]{AB}) yields
\begin{equation}
\label{alpha_star_minus_mu}
\left|\alpha_{k_i-1}^{*}-\mu\right|<\frac{1}{\left(q_{m_i}( b_{1},\ldots,b_{m_i})\right)^2}\le\frac{4q^2_{k_i-m_i-1}}{q^2_{k_i-1}}. 
\end{equation}

Since the quantity $q_{k_i-m_i-1}$ is independent of $m_i$, and the function $\varpi$ is increasing to infinity, we can choose $m_i$ large enough to satisfy the inequality $\varpi(q_{k_i-1})>8q^2_{k_i-m_i-1}$, which implies \eqref{red_to_2_den}. This completes the proof for the case $5<\gamma=\lambda(\alpha)\le 10-\sqrt{21}$. 

The remaining cases are considered by slightly modifying the original argument. Suppose that $10-\sqrt{21}<\gamma\le 6$. By Corollary \ref{cor_repr5}, $\gamma$ has the representation \eqref{repr5}
where the partial quotients $b_i, c_i$ for $i=1,2,\ldots$ do not exceed $4$ and where no ordered pair $(b_i, b_{i+1)}$ or $(c_i,c_{i+1})$ is ever equal to $(1,4)$ or $(2,4)$. We will construct $\alpha$ of the form \eqref{alpha_in_blocks}, but the definitions of the blocks $B^n_m$ and $C$ will be slightly different. Define
\begin{equation}
\begin{split}
\label{BC_2}
B^{n}_m:=b_m,b_{m-1},\ldots, b_1,5,c_1,c_2,\ldots,c_n, \quad C:=1,4,4,1.
\end{split}
\end{equation}
We formulate the following analogue of Lemma \ref{lem_k_i}.
\begin{lem}
\label{lem_k_i_2}
Let $n_k$ and $m_k$ be arbitrary sequences of {\bf even} positive integers growing to infinity. For any $10-\sqrt{21}<\gamma\le 6$ and any $\alpha$ of the form \eqref{alpha_in_blocks}, where $B^{n_i}_{m_i}$ and $C$ were defined in \eqref{BC_2},
one has $\lambda(\alpha)=\gamma$. Moreover, for any $10-\sqrt{21}<\gamma\le 6$ there exists a positive constant $\varepsilon=\varepsilon(\gamma)$ such that for any $\alpha$ of the form \eqref{alpha_in_blocks} and any $n\in\mathbb{N}$ such that $a_n(\alpha)<5$, the inequality $\lambda_{n}(\alpha)<\gamma-\varepsilon$ holds.
\end{lem}
\begin{proof}
The proof is similar to the proof of Lemma \ref{lem_k_i}. The only difference is the case when $a_n=4$. One can easily see that due to the definitions of the sequences $b_i$ and $c_i$ and the number $\alpha$, if $a_n=4$, then at least one of the numbers $a_{n-1}$ and $a_{n+1}$ is greater than $2$. Hence,
\begin{equation*}
\lambda_n(\alpha)<4+\frac{1}{3}+1=5+\frac{1}{3}<10-\sqrt{21}\le\gamma.    
\end{equation*}
\end{proof}
In the case $10-\sqrt{21}<\gamma\le 6$,  we will denote by $k_i$ the sequence of indices of $5$-s in the blocks $B^{n_k}_{m_k}$. We formulate the following analogue of Lemma \ref{leftineq}.
\begin{lem}
\label{leftineq_2}
Given $10-\sqrt{21}<\gamma\le 6$ and $\alpha$ of the form \eqref{alpha_in_blocks}. For each $i\in\mathbb{N}$ one has $\alpha_{k_i-1}^{*}>\mu$.
\end{lem}
\begin{proof}
 Arguing as in Lemma \ref{leftineq}, we see that the first $m_i$ partial quotients of $\alpha_{k_i-1}^{*}$ and $\mu$ coincide. As $m_i$ is even, we obtain
\begin{equation*}
\begin{split}
\mu=[0;b_1,\ldots,b_{m_i},\ldots]\le& [0;b_1,\ldots,b_{m_i},1,3,\ldots]\\<&
[0;b_1,\ldots,b_{m_i},1,4,\ldots]=\alpha_{k_i-1}^{*}.
\end{split}
\end{equation*}
\end{proof}
Now, the rest of the proof is identical to the previous case, we replace Lemma \ref{lem_k_i} by Lemma \ref{lem_k_i_2} and Lemma \ref{leftineq} is replaced by Lemma \ref{leftineq_2}. This finishes the proof for the case $10-\sqrt{21}<\gamma\le 6$.

We consider the remaining case $\gamma>6$. As
\begin{equation*}
5+[0;5,\overline{1,4}]+[0;\overline{1,4}]=6,    
\end{equation*}
there exists $\varepsilon=\varepsilon(\gamma)>0$ and a constant $n_0\in\mathbb{N}$ such that if
\begin{equation*}
\alpha=[a_0,a_1,\ldots,\underbrace{4,1,4,1,\ldots,4,1}_{n_0 \text{ pairs}},5^*,5,\underbrace{1,4,1,4,\ldots,1,4}_{n_0 \text{ pairs}},\ldots]    
\end{equation*}
for some $\alpha\in\mathbb{R}\setminus\mathbb{Q}$, one has 
\begin{equation*}
\max\left(\lambda_r(\alpha),\lambda_{r+1}(\alpha)\right)<\gamma-\varepsilon.
\end{equation*}
Once again, the star denotes the $r$-th partial quotient. Let $C$ be the sequence
\begin{equation}
\label{C3}
C:=\underbrace{4,1,4,1,\ldots,4,1}_{n_0 \text{ pairs}},5,5,\underbrace{1,4,1,4,\ldots,1,4}_{n_0 \text{ pairs}}.
\end{equation}
Applying Corollary \ref{cor_repr6}, we obtain the representation \eqref{repr6} of $\gamma$. Denote
\begin{equation}
\label{B3}
B^{n}_m:=b_m,b_{m-1},\ldots, b_1,c,c_1,c_2,\ldots,c_n.  
\end{equation}
Considering $\alpha$ of the form \eqref{alpha_in_blocks}, where $m_i$ and $n_i$ are odd numbers and the blocks $B^{n_i}_{m_i}$ and $C$ are defined in \eqref{B3} and \eqref{C3}, we apply the same argument as in the case $5<\gamma\le10-\sqrt{21}$. This finishes the proof of Theorem \ref{thm_oneside}.
\end{proof}
Now we prove Theorem \ref{thm_twoside}.
\begin{proof}
Due to Theorem \ref{thm_oneside}, it is sufficient to consider the case $4+[0;\overline{3,1}]+[0;2,\overline{1,3}]\le\gamma\le5$ only. Analogously to Theorem  \ref{thm_oneside}, it is sufficient to prove that
\begin{equation*}
\lambda_{n+1}(\alpha)>\lambda(\alpha)-\frac{\varpi(q_n)}{q^2_n} 
\end{equation*}
for infinitely many $n$ and
\begin{equation*}
\label{lambda_n_lt_2}
\lambda_{n+1}(\alpha)<\lambda(\alpha)+\frac{\varpi(q_n)}{q^2_n} 
\end{equation*}
for all $n$ large enough. Due to Corollary \ref{cor_repr4}, $\gamma$ can be represented as a sum \eqref{repr4}. %We will construct $\alpha$ of the form \eqref{alpha_in_blocks} using the blocks $B^n_m$ defined in \eqref{Bnm_def}
\begin{lem}
\label{lem_k_i_no_C}
Let $n_k$ and $m_k$ be arbitrary sequences of positive integers greater than some absolute constant $N$ and growing to infinity. Then for each $\alpha$ of the form
\begin{equation}
\label{alpha_in_blocks_no_C}
\alpha:=[0;B^{n_1}_{m_1},B^{n_2}_{m_2},\ldots]
\end{equation}
where the block $B^n_m$ is defined in \eqref{Bnm_def}, one has $\lambda(\alpha)=\gamma$. Moreover, if $a_n(\alpha)\ne 4$, then $\lambda_{n}(\alpha)<\gamma-0.001$.
\end{lem}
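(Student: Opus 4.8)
The plan is to follow the template of Lemma~\ref{lem_k_i}, exploiting the simplification that $\alpha$ now has no $C$‑blocks: its continued fraction is simply the concatenation, over $i=1,2,\ldots$, of the reversed string $b_{m_i},\ldots,b_1$, a single partial quotient $4$, and the string $c_1,\ldots,c_{n_i}$. In particular every partial quotient of $\alpha$ other than these inserted $4$'s is some $b_j$ or $c_j$, hence at most $3$. Write $k_i$ for the index of the $4$ coming from the block $B^{n_i}_{m_i}$. I would first recall that, by Perron's formula \eqref{Perron} together with the fact that the best rational approximations of an irrational number are its convergents, $\lambda(\alpha)=\limsup_n\lambda_n(\alpha)$; then the statement reduces to proving (i) $\lambda_{k_i}(\alpha)\to\gamma$ as $i\to\infty$, and (ii) $\lambda_n(\alpha)<\gamma-0.001$ for every $n\notin\{k_i\}$. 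Assertion (ii) is exactly the ``moreover'' part, and since it yields $\lambda_n(\alpha)<\gamma$ for all $n\notin\{k_i\}$ while $\lambda_{k_i}(\alpha)\to\gamma$, it forces $\limsup_n\lambda_n(\alpha)=\gamma$, i.e.\ $\lambda(\alpha)=\gamma$.

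For (i) the computation is routine: the first $m_i$ partial quotients of $\alpha^{*}_{k_i-1}$ are $b_1,\ldots,b_{m_i}$, so $|\alpha^{*}_{k_i-1}-\mu|<q_{m_i}(b_1,\ldots,b_{m_i})^{-2}$; the first $n_i+1$ partial quotients of $\alpha_{k_i}=[4;c_1,\ldots,c_{n_i},\ldots]$ coincide with those of $4+\nu=[4;c_1,c_2,\ldots]$, so $|\alpha_{k_i}-(4+\nu)|<q_{n_i}(c_1,\ldots,c_{n_i})^{-2}$. Since these continuants grow at least geometrically and $m_i,n_i\to\infty$, we obtain $\lambda_{k_i}(\alpha)=\alpha^{*}_{k_i-1}+\alpha_{k_i}\to\mu+4+\nu=\gamma$; this uses nothing about the two sequences beyond their divergence, as required (the case $i=1$, where $\alpha^{*}_{k_1-1}$ is a finite continued fraction, is identical).

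For (ii) I would fix $n\notin\{k_i\}$, so $a_n\le3$, and argue by a short case distinction on the position of $n$ inside its block. If $a_n\le2$ then $\lambda_n(\alpha)=\alpha_n+\alpha^{*}_{n-1}<3+1=4<\gamma-0.001$. Suppose $a_n=3$. The key point is that every block has length $m_i+1+n_i>41$, so any two successive $4$'s in $\alpha$ are separated by more than $40$ partial quotients, all $\le3$; consequently a $4$ can lie within distance $2$ of $n$ on at most one side, and when it does, on the other side there is a run of more than $20$ partial quotients $\le3$ before the nearest $4$. A $4$ adjacent to $n$ forces $a_n=b_1$ or $a_n=c_1$, and then $\alpha_n<3+\tfrac14$ or $\alpha^{*}_{n-1}<\tfrac14$, so $\lambda_n(\alpha)<4.25$. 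A $4$ at distance $2$ forces $a_n=b_2$ or $a_n=c_2$; using the elementary monotonicity of $[0;x_1,x_2,\ldots]$ in its partial quotients (decreasing in the odd‑indexed, increasing in the even‑indexed ones), whichever of $\alpha_n-3$ and $\alpha^{*}_{n-1}$ sits next to that $4$ is at most $[0;1,4,\overline{1,3}]<0.828$, while the other, being cut off from any $4$ by a long run of partial quotients $\le3$, is at most $[0;\overline{1,3}]+\varepsilon<0.7914$ for negligible $\varepsilon$; in the remaining case (no $4$ within distance $2$ of $n$) both are at most this last value. Hence in every case $\lambda_n(\alpha)=\alpha_n+\alpha^{*}_{n-1}<3+0.828+0.7914<4.62<\gamma-0.001$, the last inequality being the hypothesis $\gamma\ge4+[0;\overline{3,1}]+[0;2,\overline{1,3}]=4.62202\ldots$, so $4.62<\gamma-0.001$.

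The step I expect to be the main obstacle is precisely this numerical check in (ii): the crude estimate $\alpha_n+\alpha^{*}_{n-1}<3+0.828+0.828$ does \emph{not} fall below $\gamma$, so the argument must genuinely use that a nearby $4$ can inflate only one of $\alpha_n$ and $\alpha^{*}_{n-1}$ at a time — which is exactly where the disappearance of the $C$‑blocks is essential, since those would otherwise place $4$'s close to $n$ on both sides — together with the explicit gap $\gamma\ge4.62202\ldots$ and the $0.001$ of slack, which absorbs the truncation errors $\varepsilon$. Concretely I would write (ii) as the finite case check sketched above ($a_n=b_1$, $b_2$, a deeper $b_j$, a deeper $c_j$, $c_2$, $c_1$), after isolating the monotonicity lemma for $[0;x_1,x_2,\ldots]$ and the observation that a finite or eventually‑$4$‑containing continued fraction whose first $\ge20$ partial quotients are $\le3$ differs from its value with those digits continued by $\overline{1,3}$ by less than $q_{20}^{-2}$; the only borderline cases are $a_n=b_2$ and $a_n=c_2$.
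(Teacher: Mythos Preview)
Your proposal is correct and follows essentially the same route as the paper: both reduce to the key numerical inequality $3+[0;1,4,\overline{1,3}]+[0;\overline{1,3}]=4.6186\ldots<\gamma-0.001$, using the block structure together with the hypothesis $m_i,n_i>20$ to guarantee that a $4$ can be close to position $n$ on at most one side while the other side sees at least twenty partial quotients bounded by $3$. Your case analysis (adjacent $4$, $4$ at distance $2$, $4$ farther) is more granular than the paper's one‑line bound, but the substance is identical.
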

\begin{proof}
The proof is similar to the proof of Lemma \ref{lem_k_i}. The fact that $\lambda(\alpha)=\gamma$ is obtained using the same argument.
To prove the "Moreover" part, note that
\begin{equation*}
3+[0;1,4,\overline{1,3}]+[0;\overline{1,3}]=4.61861\ldots<4+[0;\overline{3,1}]+[0;2,\overline{1,3}]=4.62202\ldots.
\end{equation*}
Hence, if $a_n=3$, we put $N$ be such a number that
\begin{equation*}
\lambda_n(\alpha)<3+[0;1,4,\underbrace{1,3,1,3,\ldots,1,3}_{N \text{ pairs}},\ldots]+[0;\underbrace{1,3,1,3,\ldots,1,3}_{N \text{ pairs}},\ldots]< 3 + [0;1,4,\overline{1,3}]+[0;\overline{1,3}]+0.001
\end{equation*}
for any choice of remaining digits in both continued fractions. Hence, $\lambda_n(\alpha)<\gamma-0.001$. Taking into account that for $a_n\le 2$, we have a trivial estimate
\begin{equation*}
\lambda_n(\alpha)<2+[0;1]+[0;1]=4,
\end{equation*}
the proof is completed.
\end{proof}
In order to complete the proof of Theorem \ref{thm_twoside}, it is sufficient to show that given $\alpha$ of the form \eqref{alpha_in_blocks_no_C}, one has
\begin{equation}
\label{ineq_mod_omega}
\left|\lambda_{k_i}(\alpha)-\lambda(\alpha)\right|<\frac{\varpi(q_{k_i-1})}{q^2_{k_i-1}}.
\end{equation}
In this cases, $k_i$ is the index of $4$ from the block $B^{n_i}_{m_i}$. Using the same argument as in the proof of Theorem \ref{thm_oneside}, we choose $m_i$ and $n_i$ such that
\begin{equation*}
\left|\alpha_{k_i-1}^{*}-\mu\right|<\frac{\varpi(q_{k_i-1})}{2q^2_{k_i-1}}
\end{equation*}
and
\begin{equation*}
\left|\nu + 4-\alpha_{k_i}\right|<\left|\alpha_{k_i-1}^{*}-\mu\right|<\frac{\varpi(q_{k_i-1})}{2q^2_{k_i-1}}.
\end{equation*}
As 
\begin{equation*}
\lambda(\alpha)=\gamma=4+\mu+\nu    
\end{equation*}
and 
\begin{equation*}
\lambda_{k_i}(\alpha)=\alpha_{k_i-1}^{*}+\alpha_{k_i},   
\end{equation*}
the property \eqref{ineq_mod_omega} is satisfied, and thus Theorem \ref{thm_twoside} is proved.
\end{proof}

\subsection*{Acknowledgements}
The authors wish to thank the referees for the very careful reading of the manuscript and for the many valuable comments.

\end{document}